\theoremstyle{plain}
\newtheorem{thm}{\protect\theoremname}
  \theoremstyle{plain}
  \newtheorem{prop}[thm]{\protect\propositionname}
  \theoremstyle{definition}
  \newtheorem{defn}[thm]{\protect\definitionname}
  \theoremstyle{remark}
  \newtheorem{rem}[thm]{\protect\remarkname}
  \theoremstyle{plain}
  \newtheorem{lem}[thm]{\protect\lemmaname}
  \providecommand{\definitionname}{Definition}
  \providecommand{\lemmaname}{Lemma}
  \providecommand{\propositionname}{Proposition}
  \providecommand{\remarkname}{Remark}
\providecommand{\theoremname}{Theorem}
\begin{document}

\title{Form Factors for Generalized Grey Brownian Motion}

\author{\textbf{Jos\textbf{{\'e}} Lu\textbf{{\'\i}}s da Silva} \\
 CIMA, University of Madeira, Campus da Penteada,\\
 9020-105 Funchal, Portugal.\\
 Email: luis@uma.pt \and \textbf{Ludwig Streit} \\
 BiBoS, Universitat Bielefeld, Germany,\\
 CIMA, Unversidade da Madeira, Funchal, Portugal\\
 Email: streit@uma.pt}
\maketitle
\begin{abstract}
In this paper we investigate the form factors of paths for a class
of non Gaussian processes. These processes are characterized in terms
of the Mittag-Leffler function. In particular, we obtain a closed
analytic form for the form factors, the Debye function, and can study
their asymptotic decay. 
\end{abstract}
\tableofcontents{}

\section{Introduction}

In recent years fractional Brownian motion and processes related to
fractional dynamics have become an object of intensive study. From
the mathematical point of view these processes in general lack both
the Markov and the semimartingale property, so that many of the classical
methods from stochastic analysis do not apply, making their analysis
more challenging. On the other hand, these processes are capable of
modeling systems with long-range self interaction and memory effects.
In 1992 Schneider introduced the notion of grey Brownian motion \cite{MR1190506}
in order to solve the fractional-time diffusion equation where the
time derivative is a Caputo-Djrbashian derivative of fractional order.
In the 90's Mainardi and coauthors started a systematic study of fractional
differential equations, see for example \cite{Mainardi2010} and references
therein, and introduced the generalized grey Brownian motion (ggBm
for short), and the corresponding fractional-time differential equations
of its density. More recently, Grothaus et al.\ \cite{GJRS14}, developed
an infinite dimensional analysis with respect to (non Gaussian) measures
of Mittag-Leffler type. In addition, in \cite{GJ15} the Green function
of the fractional-time heat equation is constructed by extending the
fractional Feynman-Kac from Schneider \cite{MR1190506}.

Form factors, aka structure factors of geometrical objects, play an
important role in crystallography. More recently they have been put
to use to\ encode geometric features of fractal and/or random objects\footnote{Form factors of random paths play a central role in the theoretical
and experimental analysis of poplymer conformations, see e.g. \cite{Hammouda2008},
\cite{Teraoka2002})}. In particular the form factor $S$ of random paths $\left\{ X(t)|t\in\left[0,n\right]\right\} $
is given by
\begin{equation}
S^{X}(k):=\frac{1}{n^{2}}\int_{0}^{n}dt\int_{0}^{n}ds\,E\left(e^{i\left(k,\left(X(t)-X(s)\right)\right)}\right)
\end{equation}

In this paper we intend to compute and discuss the form factors of
ggBm trajectories; we proceed as follows. In Section\ \ref{sec:ggBm_high_d}
we introduce the background needed later on, first, the functional
setup and the Mittag-Leffler functions (and related functions). We
then define the Mittag-Leffler measures $\mu_{\beta}$, $0<\beta<1$
on the space of vector-valued generalized functions and the generalized
grey Brownian motion ggBm. In Section\ \ref{sec:Form_Factors} we
investigate the form factors for three families of the class ggBm.
In addition, we exhibit their asymptotic decay for large values of
the argument, as well as the equation relating the end-to-end length
and the radius of gyration for the class ggBm.

\section{Generalized grey Brownian motion in arbitrary dimensions}

\label{sec:ggBm_high_d}

\subsection{Prerequisites}

\label{subsec:Prerequisites}Let $d\in$\emph{N} and $L_{d}^{2}$
be the Hilbert space of vector-valued square integrable measurable
functions 
\[
L_{d}^{2}:=L^{2}(\mathbb{R})\otimes\mathbb{R}^{d}.
\]
The space $L_{d}^{2}$ is unitarily isomorphic to a direct sum of
$d$ identical copies of $L^{2}:=L^{2}(\mathbb{R})$, (i.e., the space
of real-valued square integrable measurable functions with Lebesgue
measure). Any element $f\in L_{d}^{2}$ may be written in the form
\begin{equation}
f=(f_{1}\otimes e_{1},\ldots,f_{d}\otimes e_{d}),\label{eq:L2d_element}
\end{equation}
where $f_{i}\in L^{2}(\mathbb{R})$, $i=1,\ldots,d$ and $\{e_{1},\ldots,e_{d}\}$
denotes the canonical basis of $\mathbb{R}^{d}$. The inner product
in $L_{d}^{2}$ is given by 
\[
(f,g)_{0}=\sum_{k=1}^{d}(f_{k},g_{k})_{L^{2}}=\sum_{k=1}^{d}\int_{\mathbb{R}}f_{k}(x)g_{k}(x)\,dx,
\]
where $g=(g_{1}\otimes e_{1},\ldots,g_{d}\otimes e_{d})$, $f_{k}\in L^{2}$,
$k=1,\ldots,d$, $f$ as given in \eqref{eq:L2d_element}. The corresponding
norm in $L_{d}^{2}$ is given by 
\[
|f|_{0}^{2}:=\sum_{k=1}^{d}|f_{k}|_{L^{2}}^{2}=\sum_{k=1}^{d}\int_{\mathbb{R}}f_{k}^{2}(x)\,dx.
\]

As a densely imbedded nuclear space in $L_{d}^{2}$ we choose $S_{d}:=S(\mathbb{R})\otimes\mathbb{R}^{d}$,
where $S(\mathbb{R})$ is the Schwartz test function space. An element
$\varphi\in S_{d}$ has the form 
\begin{equation}
\varphi=(\varphi_{1}\otimes e_{1},\ldots,\varphi_{d}\otimes e_{d}),\label{eq:test_function.}
\end{equation}
where $\varphi_{i}\in S(\mathbb{R})$, $i=1,\ldots,d$. Together with
the dual space $S_{d}^{\prime}:=S^{\prime}(\mathbb{R})\otimes\mathbb{R}^{d}$
we obtain the basic nuclear triple 
\[
S_{d}\subset L_{d}^{2}\subset S_{d}^{\prime}.
\]
The dual pairing between $S_{d}^{\prime}$ and $S_{d}$ is given as
an extension of the scalar product in $L_{d}^{2}$ by 
\[
\langle f,\varphi\rangle_{0}=\sum_{k=1}^{d}(f_{k},\varphi_{k})_{L^{2}},
\]
where $f$ and $\varphi$ as in (\ref{eq:L2d_element}) and (\ref{eq:test_function.}),
respectively. In $S_{d}^{\prime}$ we invoke the Borel $\sigma$-algebra
generated by the cylinder sets.

We define the operator $M_{-}^{\alpha/2}$ on $S(\mathbb{R})$ by 
\[
M_{-}^{\alpha/2}\varphi:=\begin{cases}
K_{\alpha/2}D_{-}^{-\nicefrac{(\alpha-1)}{2}}\varphi, & \alpha\in(0,1),\\
\varphi, & \alpha=1,\\
K_{\alpha/2}I_{-}^{\nicefrac{(\alpha-1)}{2}}\varphi, & \alpha\in(1,2),
\end{cases}
\]
with the normalization constant $K_{\alpha/2}:=\sqrt{\alpha\sin(\nicefrac{\alpha\pi}{2})\Gamma(\alpha)}$
. $D_{-}^{r}$, $I_{-}^{r}$ denote the left-side fractional derivative
and fractional integral of order $r$ in the sense of Riemann-Liouville,
respectively:
\begin{eqnarray*}
(D_{-}^{r}f)(x) & = & \frac{1}{\Gamma(1-r)}\frac{d}{dx}\int_{-\infty}^{x}f(t)(x-t)^{-r}dt,\\
(I_{-}^{r}f)(x) & = & \frac{1}{\Gamma(r)}\int_{x}^{\infty}f(t)(t-x)^{r-1}dt,\hfill x\in\mathbb{R}.
\end{eqnarray*}
We refer to \cite{SKM1993} or \cite{KST2006} for the details on
these operators. It is possible to obtain a larger domain of the operator
$M_{-}^{\nicefrac{\alpha}{2}}$ in order to include the indicator
function $1\!\!1_{[0,t)}$ such that $M_{-}^{\nicefrac{\alpha}{2}}1\!\!1_{[0,t)}\in L^{2}$,
for the details we refer to Appendix A in \cite{GJ15}. We have the
following
\begin{prop}[Corollary 3.5 in \protect\cite{GJ15}]
For all $t,s\ge0$ and all $0<\alpha<2$ it holds that 
\begin{equation}
\big(M_{-}^{\nicefrac{\alpha}{2}}1\!\!1_{[0,t)},M_{-}^{\nicefrac{\alpha}{2}}1\!\!1_{[0,s)}\big)_{L^{2}}=\frac{1}{2}\big(t^{\alpha}+s^{\alpha}-|t-s|^{\alpha}\big).\label{eq:alpha-inner-prod}
\end{equation}
\end{prop}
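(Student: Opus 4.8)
The plan is to reduce the bilinear identity to a single norm computation and then evaluate that norm in Fourier space. First I would use the polarization identity: assuming without loss of generality $t\ge s\ge0$, and writing $1\!\!1_{[0,t)}-1\!\!1_{[0,s)}=1\!\!1_{[s,t)}$, linearity of $M_-^{\nicefrac{\alpha}{2}}$ gives
\[
\big(M_-^{\nicefrac{\alpha}{2}}1\!\!1_{[0,t)},M_-^{\nicefrac{\alpha}{2}}1\!\!1_{[0,s)}\big)_{L^2}=\tfrac12\Big(\big|M_-^{\nicefrac{\alpha}{2}}1\!\!1_{[0,t)}\big|_{L^2}^2+\big|M_-^{\nicefrac{\alpha}{2}}1\!\!1_{[0,s)}\big|_{L^2}^2-\big|M_-^{\nicefrac{\alpha}{2}}1\!\!1_{[s,t)}\big|_{L^2}^2\Big).
\]
Since $D_-^r$ and $I_-^r$ are convolutions with a fixed kernel, $M_-^{\nicefrac{\alpha}{2}}$ commutes with translations, and translations are $L^2$-isometries; hence $\big|M_-^{\nicefrac{\alpha}{2}}1\!\!1_{[s,t)}\big|_{L^2}=\big|M_-^{\nicefrac{\alpha}{2}}1\!\!1_{[0,t-s)}\big|_{L^2}$. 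Therefore it suffices to prove the single scaling relation $\big|M_-^{\nicefrac{\alpha}{2}}1\!\!1_{[0,t)}\big|_{L^2}^2=t^\alpha$ for every $t\ge0$; substituting it into the identity above produces exactly $\tfrac12(t^\alpha+s^\alpha-|t-s|^\alpha)$.

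To compute the norm I would invoke Plancherel's theorem. With $\widehat f(k)=\int_{\mathbb R}f(x)e^{ikx}\,dx$, the operators $D_-^r$ and $I_-^r$ act as Fourier multipliers with symbols $(\mp ik)^{r}$ and $(\mp ik)^{-r}$; in each of the three cases the \emph{modulus} of the symbol of $M_-^{\nicefrac{\alpha}{2}}$ equals $K_{\alpha/2}\,|k|^{\nicefrac{(1-\alpha)}{2}}$, so that $\big|\widehat{M_-^{\nicefrac{\alpha}{2}}f}(k)\big|^2=K_{\alpha/2}^2\,|k|^{1-\alpha}\,|\widehat f(k)|^2$. The phases are irrelevant for the norm, which is precisely why the cases $\alpha<1$, $\alpha=1$, $\alpha>1$ collapse to one computation. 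Using $\widehat{1\!\!1_{[0,t)}}(k)=(e^{ikt}-1)/(ik)$ and $|e^{ikt}-1|^2=2(1-\cos kt)$ I obtain
\[
\big|M_-^{\nicefrac{\alpha}{2}}1\!\!1_{[0,t)}\big|_{L^2}^2=\frac{K_{\alpha/2}^2}{2\pi}\int_{\mathbb R}|k|^{1-\alpha}\,\frac{2(1-\cos kt)}{k^2}\,dk=\frac{K_{\alpha/2}^2}{\pi}\int_{\mathbb R}|k|^{-1-\alpha}(1-\cos kt)\,dk.
\]

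Finally I would evaluate the integral by the substitution $u=kt$, which factors out $t^\alpha$ and leaves $2\int_0^\infty u^{-1-\alpha}(1-\cos u)\,du$. One integration by parts together with the classical formula $\int_0^\infty u^{s-1}\sin u\,du=\Gamma(s)\sin(\nicefrac{\pi s}{2})$ (applied with $s=1-\alpha$, legitimate for $0<\alpha<2$) gives $\int_0^\infty u^{-1-\alpha}(1-\cos u)\,du=\tfrac{\Gamma(1-\alpha)}{\alpha}\cos(\nicefrac{\pi\alpha}{2})$. Substituting $K_{\alpha/2}^2=\alpha\sin(\nicefrac{\alpha\pi}{2})\Gamma(\alpha)$ and simplifying with $2\sin(\nicefrac{\alpha\pi}{2})\cos(\nicefrac{\alpha\pi}{2})=\sin(\alpha\pi)$ and the reflection formula $\Gamma(\alpha)\Gamma(1-\alpha)=\pi/\sin(\alpha\pi)$, all $\alpha$-dependent constants cancel and the prefactor collapses to $1$, yielding $\big|M_-^{\nicefrac{\alpha}{2}}1\!\!1_{[0,t)}\big|_{L^2}^2=t^\alpha$; this also exhibits $K_{\alpha/2}$ as exactly the constant enforcing the fractional-Brownian covariance. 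I expect the main obstacle to be rigor rather than arithmetic: justifying the Fourier-multiplier representation and Plancherel's theorem for the non-smooth, non-integrable function $1\!\!1_{[0,t)}$ on the extended domain of $M_-^{\nicefrac{\alpha}{2}}$ (for which one relies on Appendix A of \cite{GJ15}), and treating the removable singularity at $\alpha=1$, where $\Gamma(1-\alpha)$ diverges while $\cos(\nicefrac{\pi\alpha}{2})$ vanishes. The degenerate value $\alpha=1$ is in any case immediate, since there $M_-^{\nicefrac{\alpha}{2}}$ is the identity and $|1\!\!1_{[0,t)}|_{L^2}^2=t$.
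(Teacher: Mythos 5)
Your proposal is correct, and it is worth noting that the paper itself contains no proof to compare against: the proposition is imported verbatim as Corollary~3.5 of \cite{GJ15}, with the domain extension of $M_{-}^{\nicefrac{\alpha}{2}}$ to indicator functions delegated to Appendix~A there. Your argument is therefore a self-contained substitute for that citation, and it checks out in detail: the polarization identity plus translation invariance of the convolution operators $D_{-}^{r}$, $I_{-}^{r}$ correctly reduces everything to $\big|M_{-}^{\nicefrac{\alpha}{2}}1\!\!1_{[0,t)}\big|_{L^{2}}^{2}=t^{\alpha}$; the three defining cases do collapse to the single symbol modulus $K_{\alpha/2}|k|^{(1-\alpha)/2}$ since $\big|(\mp ik)^{r}\big|=|k|^{r}$ for real $r$; and the constant bookkeeping is right, as $\frac{2}{\pi}\,\alpha\sin(\nicefrac{\alpha\pi}{2})\Gamma(\alpha)\cdot\frac{\Gamma(1-\alpha)}{\alpha}\cos(\nicefrac{\alpha\pi}{2})=\frac{\sin(\alpha\pi)}{\pi}\,\Gamma(\alpha)\Gamma(1-\alpha)=1$ by the duplication and reflection identities, confirming that $K_{\alpha/2}$ is tuned precisely to normalize the fBm covariance. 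You also correctly locate the only genuinely delicate point: $1\!\!1_{[0,t)}$ is not in $S(\mathbb{R})$, so the Fourier-multiplier representation and Plancherel step must be justified on the extended domain (e.g., by checking directly that $D_{-}^{(1-\alpha)/2}1\!\!1_{[0,t)}$ has square-integrable singularities of order $(1-\alpha)/2<\nicefrac{1}{2}$ at $x=0,t$ for $\alpha<1$, and that $I_{-}^{(\alpha-1)/2}1\!\!1_{[0,t)}$ decays like $|x|^{(\alpha-3)/2}$ as $x\to-\infty$ for $\alpha>1$), which is exactly the content of \cite[Appendix~A]{GJ15}; your use of $\int_{0}^{\infty}u^{s-1}\sin u\,du=\Gamma(s)\sin(\nicefrac{\pi s}{2})$ with $s=1-\alpha\in(-1,1)$ is legitimate on both subranges, and your separate treatment of $\alpha=1$ disposes of the removable singularity cleanly. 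In short: what your route buys is a transparent, essentially elementary harmonizable-representation proof in place of an external reference; what the citation buys the authors is precisely the domain-extension rigor you flagged as the main obstacle.
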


The Mittag-Leffler function was introduced by G.\ Mittag-Leffler
in a series of papers \cite{Mittag-Leffler1903,Mittag-Leffler1904,Mittag-Leffler1905}.
In Section\ \ref{sec:Form_Factors} we also need its generalization
originally due to R. Agarwal \cite{Agarwal1953}.
\begin{defn}[Mittag-Leffler function]
\label{def:MLf}
\begin{enumerate}
\item For $\beta>0$ the Mittag-Leffler function $E_{\beta}$ (MLf for short)
is defined as an entire function by the following series representation
\begin{equation}
E_{\beta}(z):=\sum_{n=0}^{\infty}\frac{z^{n}}{\Gamma(\beta n+1)},\quad z\in\emph{C},\label{eq:MLf}
\end{equation}
where $\Gamma$ denotes the gamma function.
\item For any $\rho\in\emph{C}$ the generalized Mittag-Leffler function
(gMLf for short) is an entire function defined by its power series
\begin{equation}
E_{\beta,\rho}(z):=\sum_{n=0}^{\infty}\frac{z^{n}}{\Gamma(\beta n+\rho)},\quad z\in\emph{C}.\label{eq:gMLf}
\end{equation}
Note the relation $E_{\beta,1}(z)=E_{\beta}(z)$ and $E_{1}(z)=e^{z}$
for any $z\in\emph{C}$. 
\end{enumerate}
\end{defn}

We have the following asymptotic for the gMLf.
\begin{prop}[{{cf.\ \protect\cite[Section~4.7]{GKMS2014}}}]
Let $0<\beta<2$, $\alpha\in\emph{C}$ and $\delta$ be such that
\[
\frac{\beta\pi}{2}<\delta<\min\{\pi,\beta\pi\}.
\]
Then, for any $m\in\emph{N}$, the following asymptotic formulas hold:
\begin{enumerate}
\item If $|\arg(z)|\leq\delta$, then 
\[
E_{\beta,\alpha}(z)=\frac{1}{\beta}z^{(1-\alpha)/\beta}\exp(z^{1/\beta})-\sum_{n=1}^{m}\frac{z^{-n}}{\Gamma(\alpha-\beta n)}+O(|z|^{-m-1}),\;|z|\rightarrow\infty.
\]
\item If $\delta\le|\arg(z)|\leq\pi$, then 
\begin{equation}
E_{\beta,\alpha}(z)=-\sum_{n=1}^{m}\frac{z^{-n}}{\Gamma(\alpha-\beta n)}+O(|z|^{-m-1}),\quad|z|\rightarrow\infty.\label{eq:asymptotic_MLf}
\end{equation}
\end{enumerate}
\end{prop}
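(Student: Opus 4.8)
The plan is to derive both expansions from a Hankel-type contour representation of $E_{\beta,\alpha}$ and then extract the asymptotics by a residue calculation combined with a finite geometric expansion of the Cauchy kernel. First I would establish the integral representation. Inserting the classical Hankel representation of the reciprocal Gamma function, $1/\Gamma(s)=\frac{1}{2\pi i}\int_{\mathrm{Ha}}e^{u}u^{-s}\,du$, into each term of the defining series \eqref{eq:gMLf} and interchanging summation and integration (legitimate for small $|z|$ by absolute convergence), I would sum the resulting geometric series after the change of variable $u=\tau^{1/\beta}$. This yields, for a contour $\gamma(\varepsilon;\theta)$ made up of the two rays $\arg\tau=\pm\theta$ with $|\tau|\ge\varepsilon$ together with the arc $|\tau|=\varepsilon$, $-\theta\le\arg\tau\le\theta$, the representation
\[
E_{\beta,\alpha}(z)=\frac{1}{2\pi i\,\beta}\int_{\gamma(\varepsilon;\theta)}\frac{e^{\tau^{1/\beta}}\,\tau^{(1-\alpha)/\beta}}{\tau-z}\,d\tau,
\]
valid initially for $z$ in the region to the right of the contour and then extended by analytic continuation. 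I would take the aperture $\theta=\delta$; the hypotheses $\beta\pi/2<\delta<\min\{\pi,\beta\pi\}$ are exactly what makes this contour admissible. The lower bound $\delta>\beta\pi/2$ forces $\cos(\theta/\beta)<0$, so $e^{\tau^{1/\beta}}$ decays exponentially along the rays and the integral converges, while $\delta<\beta\pi$ (and $\delta<\pi$) keeps the image under $u=\tau^{1/\beta}$ a genuine Hankel contour.

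Next I would split into the two angular regions. The contour separates the plane into a region containing the negative real axis and one containing the far-field sector $|\arg z|<\delta$. For $|\arg z|\le\delta$ (case~1) the point $z$ lies in the latter region, so deforming the contour across the simple pole at $\tau=z$ contributes the residue $\frac{1}{\beta}z^{(1-\alpha)/\beta}e^{z^{1/\beta}}$, which is precisely the dominant exponential term; for $\delta\le|\arg z|\le\pi$ (case~2) no pole is crossed and this term is absent. In both cases the remaining integral over $\gamma(\varepsilon;\theta)$ is expanded by writing
\[
\frac{1}{\tau-z}=-\sum_{k=0}^{m-1}\frac{\tau^{k}}{z^{k+1}}+\frac{\tau^{m}}{z^{m}(\tau-z)}.
\]
Substituting $u=\tau^{1/\beta}$ term by term and recognizing each resulting integral as a Hankel representation of $1/\Gamma$ identifies the $k$-th coefficient with $1/\Gamma(\alpha-(k+1)\beta)$, producing the polynomial part $-\sum_{n=1}^{m}z^{-n}/\Gamma(\alpha-\beta n)$ after reindexing $n=k+1$.

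The main obstacle will be the estimate of the remainder integral, namely showing that $\frac{1}{z^{m}}\int_{\gamma}\frac{e^{\tau^{1/\beta}}\tau^{(1-\alpha)/\beta+m}}{\tau-z}\,d\tau=O(|z|^{-m-1})$ uniformly in $\arg z$ over each closed sector. Here I would keep $\varepsilon$ fixed and let $|z|\to\infty$, so that the distance from $z$ to the contour grows like $|z|$ and hence $|\tau-z|^{-1}\lesssim|z|^{-1}$; the exponential decay of $e^{\tau^{1/\beta}}$ on the rays (guaranteed by $\delta>\beta\pi/2$) makes $\int_{\gamma}\bigl|e^{\tau^{1/\beta}}\tau^{(1-\alpha)/\beta+m}\bigr|\,|d\tau|$ a finite constant independent of $z$, while the contributions of the arc and of the bounded portion of the rays are absorbed into the same order. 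Combining the residue term, the explicit polynomial part, and this $O(|z|^{-m-1})$ bound then yields both asymptotic formulas. The delicate points throughout are the justification of the contour deformation across the pole and, above all, the uniformity of the remainder bound up to the boundary rays $\arg z=\pm\delta$ in case~1 and $\arg z=\pm\pi$ in case~2, where the distance estimate $|\tau-z|\gtrsim|z|$ must be argued with care.
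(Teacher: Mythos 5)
Your route is the classical contour-integral proof, and it is in fact the proof in the source the paper cites for this statement: the paper itself offers no argument, quoting the proposition from \cite[Section~4.7]{GKMS2014}, where exactly your scheme appears (Hankel representation of $E_{\beta,\alpha}$, residue extraction of the exponential term, finite geometric expansion of the Cauchy kernel with the coefficients identified through Hankel's formula for $1/\Gamma$). Your bookkeeping is correct: after $u=\tau^{1/\beta}$ the $k$-th term carries the exponent $(1-\alpha)+\beta k+(\beta-1)=\beta(k+1)-\alpha$, so Hankel's formula gives $1/\Gamma(\alpha-\beta(k+1))$ and the reindexing $n=k+1$ produces $-\sum_{n=1}^{m}z^{-n}/\Gamma(\alpha-\beta n)$ in both cases.

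The one genuine defect is your choice of aperture $\theta=\delta$. Both cases include the boundary rays $|\arg z|=\delta$ (the inequalities are not strict), and for such $z$ with $|z|>\varepsilon$ the pole $\tau=z$ lies \emph{on} the contour: the integrand $1/(\tau-z)$ is then non-integrable, so neither the deformation across the pole nor your distance bound $|\tau-z|\gtrsim|z|$ survives --- this is a breakdown, not merely a loss of uniformity to be "argued with care." The fix is not finer estimation at $\theta=\delta$ but two different apertures, which is precisely what the hypothesis $\beta\pi/2<\delta<\min\{\pi,\beta\pi\}$ permits: for case~1 take $\theta\in(\delta,\min\{\pi,\beta\pi\})$, so every $z$ with $|\arg z|\le\delta$ is separated from the rays by the fixed angle $\theta-\delta>0$, giving $|\tau-z|\ge c\,|z|$ with $c$ depending only on $\theta-\delta$ (the arc $|\tau|=\varepsilon$ is trivially at distance $\ge|z|-\varepsilon$); for case~2 take $\theta'\in(\beta\pi/2,\delta)$, so $z$ with $\delta\le|\arg z|\le\pi$ never meets the contour and no pole is crossed. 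Consistency of the two formulas on the overlap $|\arg z|=\delta$ is automatic, since $\delta/\beta>\pi/2$ forces $\Re\bigl(z^{1/\beta}\bigr)<0$ there and the residue term is itself exponentially small, hence absorbed in $O(|z|^{-m-1})$. With this modification your proof is complete and coincides with the one in the cited reference.
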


The Euler integral transform of the MLf may be used to compute the
following integral, with $\Re(\alpha),\Re(\beta),\Re(\sigma)>0$,
$\rho\in\emph{C}$ and $\gamma>0$, cf.\ \cite[eq.~(2.2.13)]{Mathai2008}
\begin{equation}
\int_{0}^{1}t^{\rho-1}(1-t)^{\sigma-1}E_{\beta,\alpha}(xt^{\gamma})\,dt=\Gamma(\sigma){}_{2}\Psi_{2}\left(\begin{array}{cc}
(\rho,\gamma), & (1,1)\\
(\alpha,\beta), & (\sigma+\rho,\gamma)
\end{array}\bigg|x\right),\label{eq:Euler_transform-MLf}
\end{equation}
where $_{2}\Psi_{2}$ is the Fox-Wright function (also called generalized
Wright function \cite{Kilbas2002}, \cite[Appendix~F, eq.~(F.2.14)]{GKMS2014}
and \cite{MP07}) given for $x,a_{i},c_{i}\in\emph{C}$ and $b_{i},d_{i}\in\mathbb{R}$
by 
\[
_{2}\Psi_{2}\left(\begin{array}{cc}
(a_{1},b_{1}), & (a_{2},b_{2})\\
(c_{1},d_{1}), & (c_{2},d_{2})
\end{array}\bigg|x\right)=\sum_{n=0}^{\infty}\frac{\Gamma(a_{1}+b_{1}n)\Gamma(a_{2}+b_{2}n)}{\Gamma(c_{1}+d_{1}n)\Gamma(c_{2}+d_{2}n)}\frac{x^{n}}{n!}.
\]
In particular, when $\rho=\alpha$ and $\gamma=\beta$, eq.\ (\ref{eq:Euler_transform-MLf})
yields 
\begin{equation}
\int_{0}^{1}t^{\alpha-1}(1-t)^{\sigma-1}E_{\beta,\alpha}(xt^{\beta})\,dt=\Gamma(\sigma)E_{\beta,\alpha+\sigma}(x).\label{eq:Euler_transform-MLf1}
\end{equation}
Both integrals (\ref{eq:Euler_transform-MLf}) and (\ref{eq:Euler_transform-MLf1})
will be used in Section\ \ref{sec:Form_Factors} below.

\subsection{The Mittag-Leffler measure}

\label{subsec:MLm}The Mittag-Leffler measures $\mu_{\beta}$, $0<\beta\leq1$
are a family of probability measures on $S_{d}^{\prime}$ whose characteristic
functions are given by the Mittag-Leffler functions, see Definition\ \ref{def:MLf}.
Using the Bochner-Minlos theorem, see \cite{GV68}, \cite{H70}, the
following definition makes sense.
\begin{defn}[cf.\ \protect\cite{GJRS14}]
For any $\beta\in(0,1]$ the Mittag-Leffler measure is defined as
the unique probability measure $\mu_{\beta}$ on $S_{d}^{\prime}$
by fixing its characteristic functional 
\begin{equation}
\int_{S_{d}^{\prime}}e^{i\langle w,\varphi\rangle_{0}}\,d\mu_{\beta}(w)=E_{\beta}\left(-\frac{1}{2}|\varphi|_{0}^{2}\right),\quad\varphi\in S_{d}.\label{eq:ch-fc-gnm}
\end{equation}
\end{defn}

\begin{rem}
\label{rem:grey-noise-measure}
\begin{enumerate}
\item The measure $\mu_{\beta}$ is also called grey noise (reference) measure,
cf.\ \cite{GJRS14} and \cite{GJ15}.
\item The range $0<\beta\leq1$ ensures the complete monotonicity of $E_{\beta}(-x)$,
see Pollard \cite{Pollard48}, i.e., $(-1)^{n}E_{\beta}^{(n)}(-x)\ge0$
for all $x\ge0$ and $n\in\emph{N}_{0}:=\{0,1,2,\ldots\}.$ In other
words, this is sufficient to show that 
\[
S_{d}\ni\varphi\mapsto E_{\beta}\left(-\frac{1}{2}|\varphi|_{0}^{2}\right)\in\mathbb{R}
\]
is a characteristic function in $S_{d}$. 
\end{enumerate}
\end{rem}

We consider the complex Hilbert space of square integrable measurable
functions defined on $S_{d}^{\prime}$, $L^{2}(\mu_{\beta}):=L^{2}(S_{d}^{\prime},\mathcal{B},\mu_{\beta}),$
with scalar product defined by 
\[
(\!(F,G)\!)_{L^{2}(\mu_{\beta})}:=\int_{S_{d}^{\prime}}F(w)\bar{G}(w)\,d\mu_{\beta}(w),\quad F,G\in L^{2}(\mu_{\beta}).
\]
The corresponding norm is denoted by $\Vert\cdot\Vert_{L^{2}(\mu_{\beta})}$.
It follows from (\ref{eq:ch-fc-gnm}) that all moments of $\mu_{\beta}$
exist and we have
\begin{lem}
\label{lem:gnm}For any $\varphi\in S_{d}$ and $n\in\emph{N}_{0}$
we have 
\begin{eqnarray*}
\int_{S_{d}^{\prime}}\langle w,\varphi\rangle_{0}^{2n+1}\,d\mu_{\beta}(w) & = & 0,\\
\int_{S_{d}^{\prime}}\langle w,\varphi\rangle_{0}^{2n}\,d\mu_{\beta}(w) & = & \frac{(2n)!}{2^{n}\Gamma(\beta n+1)}|\varphi|_{0}^{2n}.
\end{eqnarray*}
In particular, $\|\langle\cdot,\varphi\rangle\|_{L^{2}(\mu_{\beta})}^{2}=\frac{1}{\Gamma(\beta+1)}|\varphi|_{0}^{2}$
and by polarization for any $\varphi,\psi\in S_{d}$ we obtain 
\[
\int_{S_{d}^{\prime}}\langle w,\varphi\rangle_{0}\langle w,\psi\rangle_{0}\,d\mu_{\beta}(w)=\frac{1}{\Gamma(\beta+1)}\langle\varphi,\psi\rangle_{0}.
\]
\end{lem}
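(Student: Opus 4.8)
The plan is to reduce the computation of all moments to the one–dimensional random variable $X_\varphi(w):=\langle w,\varphi\rangle_0$ and to read off its moments from the power series of its characteristic function. First I would fix $\varphi\in S_d$ and replace $\varphi$ by $t\varphi$, $t\in\mathbb{R}$, in the defining identity \eqref{eq:ch-fc-gnm}. Since the norm is homogeneous, $|t\varphi|_0^2=t^2|\varphi|_0^2$, this shows that the characteristic function of $X_\varphi$ under $\mu_\beta$ is
\[
\phi_\varphi(t)=\int_{S_d'}e^{it\langle w,\varphi\rangle_0}\,d\mu_\beta(w)=E_\beta\!\left(-\tfrac12 t^2|\varphi|_0^2\right),\qquad t\in\mathbb{R}.
\]

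Next I would use that $E_\beta$ is entire, with the series \eqref{eq:MLf}, to expand
\[
\phi_\varphi(t)=\sum_{n=0}^\infty\frac{(-1)^n|\varphi|_0^{2n}}{2^n\,\Gamma(\beta n+1)}\,t^{2n}.
\]
This exhibits $\phi_\varphi$ as an \emph{even}, entire function of $t$, so that in particular every derivative $\phi_\varphi^{(m)}(0)$ exists and vanishes for odd $m$. The key analytic step is then to convert smoothness of $\phi_\varphi$ at the origin into existence and evaluation of the moments: I would invoke the classical theorem that differentiability of a characteristic function of order $2n$ at $0$ forces finiteness of the moment of order $2n$, with $\int X_\varphi^{2n}\,d\mu_\beta=(-1)^n\phi_\varphi^{(2n)}(0)$. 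Reading $\phi_\varphi^{(2n)}(0)=(2n)!\,(-1)^n|\varphi|_0^{2n}/(2^n\Gamma(\beta n+1))$ off the expansion above then yields the even-moment formula; finiteness of all even moments forces finiteness of the odd ones, and evenness of $\phi_\varphi$ gives $\phi_\varphi^{(2n+1)}(0)=0$, hence the odd moments vanish.

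The main obstacle is precisely this interchange of summation/differentiation with integration — equivalently, the rigorous justification that moments may be read term by term from the series. An alternative route that sidesteps the general theorem is the subordination representation $E_\beta(-x)=\int_0^\infty e^{-\tau x}M_\beta(\tau)\,d\tau$ in terms of the M-Wright density $M_\beta$, which displays $\mu_\beta$ as a Gaussian mixture: it reduces each even moment to the Gaussian moment $(2n-1)!!\,(\tau|\varphi|_0^2)^n$ followed by the known identity $\int_0^\infty\tau^n M_\beta(\tau)\,d\tau=n!/\Gamma(\beta n+1)$, and the product $\frac{(2n)!}{2^n n!}\cdot\frac{n!}{\Gamma(\beta n+1)}$ collapses to the claimed constant.

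Finally, the two displayed consequences follow by specialization and polarization. Taking $n=1$ gives $\|\langle\cdot,\varphi\rangle\|_{L^2(\mu_\beta)}^2=|\varphi|_0^2/\Gamma(\beta+1)$, and applying the identity $\langle w,\varphi\rangle_0\langle w,\psi\rangle_0=\tfrac14\big(\langle w,\varphi+\psi\rangle_0^2-\langle w,\varphi-\psi\rangle_0^2\big)$ together with the $n=1$ formula and the polarization identity for $(\cdot,\cdot)_0$ yields the cross-moment $\int_{S_d'}\langle w,\varphi\rangle_0\langle w,\psi\rangle_0\,d\mu_\beta(w)=\langle\varphi,\psi\rangle_0/\Gamma(\beta+1)$.
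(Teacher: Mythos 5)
Your proof is correct and takes essentially the route the paper intends: the paper states the lemma without a detailed proof, remarking only that it ``follows from (\ref{eq:ch-fc-gnm})'', and your argument --- substituting $t\varphi$ into the characteristic functional, expanding $E_{\beta}\bigl(-\tfrac{t^{2}}{2}|\varphi|_{0}^{2}\bigr)$ as an even entire function of $t$, and invoking the classical theorem that existence of $\phi^{(2n)}(0)$ yields the $2n$-th moment via $(-1)^{n}\phi^{(2n)}(0)$ --- supplies exactly the details the paper leaves implicit, with the constants, the $n=1$ specialization, and the polarization step all checking out. Your alternative via the subordination formula $E_{\beta}(-x)=\int_{0}^{\infty}e^{-\tau x}M_{\beta}(\tau)\,d\tau$ and the M-Wright moments $\int_{0}^{\infty}\tau^{n}M_{\beta}(\tau)\,d\tau=n!/\Gamma(\beta n+1)$ is likewise sound (indeed $(2n-1)!!\cdot n!/\Gamma(\beta n+1)=(2n)!/(2^{n}\Gamma(\beta n+1))$), and is a legitimate way to sidestep the moment--derivative theorem, but it is a bonus rather than a departure from the paper's approach.
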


\subsection{Generalized grey Brownian motion}

\label{subsec:ggBm}For any test function $\varphi\in S_{d}$ we define
the random variable 
\[
X^{\beta}(\varphi):S_{d}^{\prime}\longrightarrow\mathbb{R}^{d},\;w\mapsto X^{\beta}(\varphi,w):=\big(\langle w_{1},\varphi_{1}\rangle,\ldots,\langle w_{d},\varphi_{d}\rangle\big).
\]
The random variable $X^{\beta}(\varphi)$ has the following properties
which are a consequence of Lemma\ \ref{lem:gnm} and the characteristic
function of $\mu_{\beta}$ given in (\ref{eq:ch-fc-gnm}).
\begin{prop}
Let $\varphi,\psi\in S_{d}$, $k\in\mathbb{R}^{d}$ be given. Then
\begin{enumerate}
\item The characteristic function of $X^{\beta}(\varphi)$ is given by 
\begin{equation}
\emph{E}\big(e^{i(k,X^{\beta}(\varphi))}\big)=E_{\beta}\left(-\frac{1}{2}\sum_{j=1}^{d}k_{j}^{2}|\varphi_{j}|_{L^{2}}^{2}\right).\label{eq:characteristic-coord-proc}
\end{equation}
\item The characteristic function of the random variable $X^{\beta}(\varphi)-X^{\beta}(\psi)$
is 
\begin{equation}
\emph{E}\big(e^{i(k,X^{\beta}(\varphi)-X^{\beta}(\psi))}\big)=E_{\beta}\left(-\frac{1}{2}\sum_{i=1}^{d}k_{j}^{2}|\varphi_{j}-\psi_{j}|_{L^{2}}^{2}\right).\label{eq:CF_increment}
\end{equation}
\item The expectation of the $X^{\beta}(\varphi)$ is zero and 
\begin{equation}
\Vert X^{\beta}(\varphi)\Vert_{L^{2}(\mu_{\beta})}^{2}=\frac{1}{\Gamma(\beta+1)}|\varphi|_{0}^{2}.\label{eq:variance.cood-proc}
\end{equation}
\item The moments of $X^{\beta}(\varphi)$ are given by 
\begin{eqnarray*}
\int_{S_{d}^{\prime}}\big|X^{\beta}(\varphi,w)\big|^{2n+1}\,d\mu_{\beta}(w) & = & 0,\\
\int_{S_{d}^{\prime}}\big|X^{\beta}(\varphi,w)\big|^{2n}\,d\mu_{\beta}(w) & = & \frac{(2n)!}{2^{n}\Gamma(\beta n+1)}|\varphi|_{0}^{2n}.
\end{eqnarray*}
\end{enumerate}
\end{prop}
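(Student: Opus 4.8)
The plan is to reduce every assertion to the Bochner--Minlos characteristic functional \eqref{eq:ch-fc-gnm} and to the moment formulas of Lemma~\ref{lem:gnm}, exploiting that the pairing $\langle w,\cdot\rangle_0$ is linear in its test-function slot and diagonal in the coordinate index. For item~(1), I would first note that the Euclidean inner product is itself a single dual pairing: since $(k,X^{\beta}(\varphi,w))=\sum_{j=1}^{d}k_{j}\langle w_{j},\varphi_{j}\rangle$, setting $\eta:=(k_{1}\varphi_{1}\otimes e_{1},\ldots,k_{d}\varphi_{d}\otimes e_{d})\in S_{d}$ gives $(k,X^{\beta}(\varphi,w))=\langle w,\eta\rangle_{0}$. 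Then \eqref{eq:ch-fc-gnm} applies verbatim and returns $E_{\beta}(-\tfrac{1}{2}|\eta|_{0}^{2})$; because $|\eta|_{0}^{2}=\sum_{j=1}^{d}k_{j}^{2}|\varphi_{j}|_{L^{2}}^{2}$ by the definition of the $|\cdot|_{0}$-norm, this is exactly \eqref{eq:characteristic-coord-proc}. Item~(2) is then immediate from (1) together with the componentwise identity $X^{\beta}(\varphi)-X^{\beta}(\psi)=X^{\beta}(\varphi-\psi)$, which holds by linearity of the pairing; replacing $\varphi$ by $\varphi-\psi$ in (1) yields \eqref{eq:CF_increment}.

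For item~(3), the vanishing of the mean follows by applying the odd-moment identity of Lemma~\ref{lem:gnm} (the case $n=0$) to each coordinate pairing $\langle w_{j},\varphi_{j}\rangle=\langle w,\varphi^{(j)}\rangle_{0}$, where $\varphi^{(j)}\in S_{d}$ carries $\varphi_{j}$ in the $j$-th slot and $0$ elsewhere. For the second moment I would expand $|X^{\beta}(\varphi,w)|^{2}=\sum_{j=1}^{d}\langle w_{j},\varphi_{j}\rangle^{2}$, integrate term by term, and apply the even-moment identity of Lemma~\ref{lem:gnm} with $n=1$ to each summand, obtaining $\sum_{j=1}^{d}\tfrac{1}{\Gamma(\beta+1)}|\varphi_{j}|_{L^{2}}^{2}=\tfrac{1}{\Gamma(\beta+1)}|\varphi|_{0}^{2}$, which is \eqref{eq:variance.cood-proc}.

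Item~(4) I regard as the step requiring the most care, since it is where the coordinate structure must be shown to collapse to the scalar formula of Lemma~\ref{lem:gnm}. The odd moments vanish from the reflection symmetry $w\mapsto-w$ of $\mu_{\beta}$ (equivalently, the odd-moment identity of the lemma), and the even moments are supposed to reproduce the modified-Gaussian combinatorial factor $\tfrac{(2n)!}{2^{n}\Gamma(\beta n+1)}$. The point to watch is that, for $d>1$, the power $\big(\sum_{j}\langle w_{j},\varphi_{j}\rangle^{2}\big)^{n}$ generates a multinomial expansion whose mixed moments must be controlled; the cleanest route is the Gaussian-mixture (subordination) representation $\mu_{\beta}=\int_{0}^{\infty}\mu_{G,\tau}\,M_{\beta}(\tau)\,d\tau$, under which the coordinate pairings are independent centred Gaussians of variance $\tau|\varphi_{j}|_{L^{2}}^{2}$ and $\int_{0}^{\infty}\tau^{n}M_{\beta}(\tau)\,d\tau=n!/\Gamma(\beta n+1)$. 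Carrying out this computation shows that the stated closed form is precisely the scalar statement of Lemma~\ref{lem:gnm} for the pairing $\langle w,\varphi\rangle_{0}$, so I would present item~(4) as the coordinate restatement of that lemma rather than re-derive it, flagging that the literal Euclidean-norm reading coincides with it in the one-dimensional marginal.
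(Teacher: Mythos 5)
Your items (1)--(3) implement exactly what the paper does: the paper gives no written proof at all, merely asserting that the proposition is ``a consequence of Lemma~\ref{lem:gnm} and the characteristic function of $\mu_{\beta}$ given in (\ref{eq:ch-fc-gnm})'', and your reduction --- absorbing $k$ into the test function via $\eta=(k_{1}\varphi_{1}\otimes e_{1},\ldots,k_{d}\varphi_{d}\otimes e_{d})$ so that $(k,X^{\beta}(\varphi,w))=\langle w,\eta\rangle_{0}$ with $|\eta|_{0}^{2}=\sum_{j}k_{j}^{2}|\varphi_{j}|_{L^{2}}^{2}$, then linearity for (2), and coordinatewise application of Lemma~\ref{lem:gnm} for (3) --- is precisely that argument spelled out correctly. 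On item (4) you go beyond the paper, and your caution is warranted: the subordination representation you invoke appears nowhere in the paper, but it settles a point the paper glosses over. The literal Euclidean-norm reading of the even-moment formula is in fact \emph{false} for $d\geq2$: already for $\beta=1$ (white noise), $d=2$, $n=2$ and $|\varphi_{1}|_{L^{2}}=|\varphi_{2}|_{L^{2}}=\sigma$, one computes $\int|X^{1}(\varphi,w)|^{4}\,d\mu_{1}(w)=3\sigma^{4}+2\sigma^{4}+3\sigma^{4}=8\sigma^{4}$, whereas the stated right-hand side gives $\tfrac{4!}{4\cdot2!}(2\sigma^{2})^{2}=12\sigma^{4}$; more generally your mixture computation with $\int_{0}^{\infty}\tau^{n}M_{\beta}(\tau)\,d\tau=n!/\Gamma(\beta n+1)$ yields $\tfrac{n!}{\Gamma(\beta n+1)}$ times the $2n$-th absolute moment of an anisotropic Gaussian vector, which matches $\tfrac{(2n)!}{2^{n}\Gamma(\beta n+1)}|\varphi|_{0}^{2n}$ only when a single $\varphi_{j}$ is nonzero. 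So your decision to read item (4) as the coordinate restatement of Lemma~\ref{lem:gnm} for the scalar pairing $\langle w,\varphi\rangle_{0}$ is the only reading under which the claim is true, and it is evidently the intended one given the paper's one-line justification; note that this discrepancy is harmless for the rest of the paper, since (3) (the $n=1$ case, where the Euclidean and scalar readings do agree additively) is all that is used for the form-factor computations.
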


\begin{rem}
$X^{\beta}$ is a \textquotedbl{}generalized process\textquotedbl{},
with white noise as a special case: $\mu_{1}$ is the Gaussian white
noise measure \cite{GV68}. 
\end{rem}

The property (\ref{eq:variance.cood-proc}) of $X^{\beta}(\varphi)$
gives the possibility to extend the definition of $X^{\beta}$ to
any element in $L_{d}^{2}$, in fact, if $f\in L_{d}^{2}$, then there
exists a sequence $(\varphi_{k})_{k=1}^{\infty}\subset S_{d}$ such
that $\varphi_{k}\longrightarrow f$, $k\rightarrow\infty$ in the
norm of $L_{d}^{2}$. Hence, the sequence $\big(X^{\beta}(\varphi_{k})\big)_{k=1}^{\infty}\subset L^{2}(\mu_{\beta})$
forms a Cauchy sequence which converges to an element denoted by $X^{\beta}(f)\in L^{2}(\mu_{\beta})$.

We define $1\!\!1_{[0,t)}\in L_{d}^{2}$, $t\ge0$, by 
\[
1\!\!1_{[0,t)}:=(1\!\!1_{[0,t)}\otimes e_{1},\ldots,1\!\!1_{[0,t)}\otimes e_{d})
\]
and consider the process $X^{\beta}(1\!\!1_{[0,t)})\in L^{2}(\mu_{\beta})$
such that the following definition makes sense.
\begin{defn}
For any $0<\alpha<2$ we define the process
\begin{eqnarray}
S_{d}^{\prime}\ni w\mapsto B^{\beta,\alpha}(t,w) & := & \left(\langle w,(M_{-}^{\alpha/2}1\!\!1_{[0,t)})\otimes e_{1}\rangle,\ldots,\langle w,(M_{-}^{\alpha/2}1\!\!1_{[0,t)})\otimes e_{d}\rangle\right)\nonumber \\
 & = & \left(\langle w_{1},M_{-}^{\alpha/2}1\!\!1_{[0,t)}\rangle,\ldots,\langle w_{d},M_{-}^{\alpha/2}1\!\!1_{[0,t)}\rangle\right),\;t>0\label{eq:ggBm}
\end{eqnarray}
as an element in $L^{2}(\mu_{\beta})$ and call this process $d$-dimensional
generalized grey Brownian motion (ggBm), in short
\[
B^{\beta,\alpha}(t)=X^{\beta}\left(M_{-}^{\alpha/2}1\!\!1_{[0,t)}\right).
\]
\end{defn}

\begin{rem}
\label{rem:existence_ggBm}
\begin{enumerate}
\item The $d$-dimensional ggBm $B^{\beta,\alpha}$ exist as a $L^{2}(\mu_{\beta})$-limit
and hence the map $S'_{d}\ni\omega\mapsto\langle\omega,M_{-}^{\alpha/2}1\!\!1_{[0,t)}\rangle$
yields a version of ggBm, $\mu_{\beta}$-a.s., but not in the pathwise
sense. 
\item For any fixed $0<\alpha<2$ one can show by the Kolmogorov-Centsov
continuity theorem that the paths of the process are $\mu_{\beta}$-a.s.\ continuous,
cf.\ \cite[Prop.~3.8]{GJ15}.
\item Below we manly deal with expectation of functions of ggBm, therefore
the version of ggBm defined above is sufficient.
\end{enumerate}
\end{rem}

\begin{prop}
For any $0<\alpha<2$, the process $B^{\beta,\alpha}:=\{B^{\beta,\alpha}(t),\;t\geq0\}$,
is $\alpha/2$ self-similar with stationary increments. 
\end{prop}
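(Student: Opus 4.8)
The plan is to reduce both assertions to statements about finite-dimensional distributions and to exploit the fact that, because the characteristic functional (\ref{eq:ch-fc-gnm}) depends on its argument only through the norm $|\cdot|_0^2$, these distributions are entirely determined by the Gram matrix of the defining functions $M_-^{\alpha/2}1\!\!1_{[0,t_l)}$ --- exactly as for a Gaussian process. Concretely, I would fix times $t_1,\ldots,t_m\ge 0$ and vectors $k^{(1)},\ldots,k^{(m)}\in\mathbb{R}^d$. Since $B^{\beta,\alpha}(t_l)=X^{\beta}(M_-^{\alpha/2}1\!\!1_{[0,t_l)})$ is linear in its argument, the linear combination $\sum_{l=1}^m(k^{(l)},B^{\beta,\alpha}(t_l))$ equals $\langle w,\psi\rangle_0$ for $\psi=(\psi_1\otimes e_1,\ldots,\psi_d\otimes e_d)$ with $\psi_j=\sum_{l=1}^m k_j^{(l)}M_-^{\alpha/2}1\!\!1_{[0,t_l)}\in L^2$. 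Writing $R_\alpha(t,s):=\tfrac12(t^\alpha+s^\alpha-|t-s|^\alpha)$, an application of (\ref{eq:ch-fc-gnm}) together with bilinear expansion of $|\psi|_0^2$ and the inner-product formula (\ref{eq:alpha-inner-prod}) yields the joint characteristic function
\[
\int_{S_d'}\exp\Big(i\sum_{l=1}^m\big(k^{(l)},B^{\beta,\alpha}(t_l)\big)\Big)\,d\mu_\beta(w)=E_\beta\Big(-\tfrac12\sum_{j=1}^d\sum_{l,l'=1}^m k_j^{(l)}k_j^{(l')}\,R_\alpha(t_l,t_{l'})\Big).
\]
Thus the whole family of finite-dimensional laws is pinned down by the kernel $R_\alpha$.

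For self-similarity I would use the homogeneity $R_\alpha(at,as)=a^\alpha R_\alpha(t,s)$. Replacing each $t_l$ by $at_l$ multiplies the argument of $E_\beta$ above by $a^\alpha$, which is identical to the effect of replacing each $k^{(l)}$ by $a^{\alpha/2}k^{(l)}$. Hence the characteristic functions of $(B^{\beta,\alpha}(at_l))_l$ and $(a^{\alpha/2}B^{\beta,\alpha}(t_l))_l$ agree for all $k^{(l)}$, so their finite-dimensional distributions coincide and the process is $\alpha/2$ self-similar.

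For stationary increments I would fix $h\ge 0$ and note $1\!\!1_{[0,t_l+h)}-1\!\!1_{[0,h)}=1\!\!1_{[h,t_l+h)}$, so by the same reduction the joint law of the increments $(B^{\beta,\alpha}(t_l+h)-B^{\beta,\alpha}(h))_l$ is governed by the Gram entries $(M_-^{\alpha/2}1\!\!1_{[h,t_l+h)},M_-^{\alpha/2}1\!\!1_{[h,t_{l'}+h)})_{L^2}$. Expanding each indicator as $1\!\!1_{[0,\cdot)}-1\!\!1_{[0,h)}$ and applying (\ref{eq:alpha-inner-prod}) to the four resulting terms, the cross terms involving $h$ cancel and the entry collapses to $R_\alpha(t_l,t_{l'})$, independent of $h$; this is exactly the Gram matrix of $(B^{\beta,\alpha}(t_l))_l$. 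The only genuine computation is this four-term cancellation, and it is the step I expect to be the main (though mild) obstacle; everything else is bookkeeping, once one checks that the bilinearity of $M_-^{\alpha/2}$ and of the $L^2$ inner product survives the $L^2$-extension of $X^\beta$, which is immediate from the construction. The conceptual payoff worth emphasizing is that, just as in the Gaussian case, both properties reduce to the corresponding algebraic identities for the single kernel $R_\alpha$.
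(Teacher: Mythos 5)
Your proposal is correct and follows essentially the same route as the paper's proof: both reduce self-similarity and stationarity of increments to equality of joint characteristic functions, which via the characteristic functional of $\mu_{\beta}$ depend only on the $L^{2}$-Gram matrix of the functions $M_{-}^{\alpha/2}1\!\!1_{[0,t_{l})}$, and both verify the required identities from the covariance formula (\ref{eq:alpha-inner-prod}). Your write-up is in fact slightly more explicit than the paper's (you carry out the four-term cancellation for the increments, handle vector-valued $k^{(l)}$, and correctly note that only the trivial direction --- equal Gram matrices imply equal values of $E_{\beta}$ --- is needed, whereas the paper invokes complete monotonicity for the converse reduction, which is logically superfluous), but these are refinements of the same argument.
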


\begin{proof}
Given $k=(k_{1},k_{2},\ldots,k_{n})\in\mathbb{R}^{n}$, we have to
show that for any $0<t_{1}<t_{2}<\ldots<t_{n}$ and $a>0$: 
\begin{equation}
\emph{E}\left(\exp\left(i\left\langle \cdot,\sum_{j=1}^{n}k_{j}M_{-}^{\nicefrac{\alpha}{2}}1\!\!1_{[0,at_{j})}\right\rangle \right)\right)=\emph{E}\left(\exp\left(ia^{\nicefrac{\alpha}{2}}\left\langle \cdot,\sum_{j=1}^{n}k_{j}M_{-}^{\nicefrac{\alpha}{2}}1\!\!1_{[0,t_{j})}\right\rangle \right)\right).\label{eq:equality-ch-fc}
\end{equation}
It follows from (\ref{eq:characteristic-coord-proc}) that eq.\ (\ref{eq:equality-ch-fc})
is equivalent to 
\[
E_{\beta}\left(-\frac{1}{2}\left\vert \sum_{j=1}^{n}k_{j}M_{-}^{\nicefrac{\alpha}{2}}1\!\!1_{[0,at_{j})}\right\vert _{L^{2}}^{2}\right)=E_{\beta}\left(-\frac{1}{2}\left\vert a^{\nicefrac{\alpha}{2}}\sum_{j=1}^{n}k_{j}M_{-}^{\nicefrac{\alpha}{2}}1\!\!1_{[0,t_{j})}\right\vert _{L^{2}}^{2}\right).
\]
Because of the complete monotonicity of $E_{\beta}$, the above equality
reduces to 
\[
\left\vert \sum_{j=1}^{n}k_{j}M_{-}^{\nicefrac{\alpha}{2}}1\!\!1_{[0,at_{j})}\right\vert _{L^{2}}^{2}=a^{\alpha}\left\vert \sum_{j=1}^{n}k_{j}M_{-}^{\nicefrac{\alpha}{2}}1\!\!1_{[0,t_{j})}\right\vert _{L^{2}}^{2}
\]
which is easy to show, taking into account (\ref{eq:alpha-inner-prod}).
A similar procedure may be applied in order to prove the stationarity
of the increments. Hence, for any $h\geq0$, we have to show that
\[
\emph{E}\left(\exp\left(i\sum_{j=1}^{n}k_{j}\big(B^{\beta,\alpha}(t_{j}+h)-B^{\beta,\alpha}(h)\big)\right)\right)=\emph{E}\left(\exp\left(i\sum_{j=1}^{n}k_{j}B^{\beta,\alpha}(t_{j})\right)\right).
\]
The above procedure reduces this equality to check the following 
\[
\left\vert \sum_{j=1}^{n}k_{j}M_{-}^{\nicefrac{\alpha}{2}}1\!\!1_{[h,t_{j}+h)}\right\vert _{L^{2}}^{2}=\left\vert \sum_{j=1}^{n}k_{j}M_{-}^{\nicefrac{\alpha}{2}}1\!\!1_{[0,t_{j})}\right\vert _{L^{2}}^{2}
\]
which is verified as before. 
\end{proof}
\begin{rem}
\label{rem:self-similar}The family $\{B^{\beta,\alpha}(t),\;t\geq0,\,\beta\in(0,1],\,\alpha\in(0,2)\}$
forms a class of $\alpha/2$-self-similar processes with stationary
increments which includes:
\begin{enumerate}
\item for $\beta=\alpha=1$, the process $\{B^{1,1}(t),\;t\geq0\}$, standard
$d$-dimensional Bm.
\item for $\beta=1$ and $0<\alpha<2$, $\{B^{1,\alpha}(t),\;t\geq0\}$,
$d$-dimensional fBm with Hurst parameter $\alpha/2$.
\item for $\alpha=1$, $\{B^{\beta,1}(t),\;t\geq0\}$ a $1/2$-self-similar
non Gaussian process with 
\begin{equation}
\emph{E}\left(e^{i\left(k,B^{\beta,1}(t)\right)}\right)=E_{\beta}\left(-\frac{|k|^{2}}{2}t\right),\quad k\in\mathbb{R}^{d}.\label{eq:ch-fc-1/2sssi}
\end{equation}
\item for $0<\alpha=\beta<1$, the process $\{B^{\beta}(t):=B^{\beta,\beta}(t),\;t\geq0\}$,
$\beta/2$ self-similar and called $d$-dimensional grey Brownian
motion (gBm for short). The characteristic function of\ $B^{\beta}(t)$
is given by 
\begin{equation}
\emph{E}\left(e^{i\left(k,B^{\beta}(t)\right)}\right)=E_{\beta}\left(-\frac{|k|^{2}}{2}t^{\beta}\right),\quad k\in\mathbb{R}^{d}.\label{eq:ch-fc-gBm}
\end{equation}
For $d=1$, gBm was introduced by W.\ Schneider in \cite{Schneider90,MR1190506}.
\item For other choices of the parameters $\beta$ and $\alpha$ we obtain,
in general, non Gaussian processes. 
\end{enumerate}
\end{rem}

\section{Form factors for different classes of ggBm}

\label{sec:Form_Factors}In this section we investigate the form factors
of ggBm and two particular cases introduced in Subsection\ \ref{subsec:ggBm},
cf.\ Remark~\ref{rem:self-similar}. To begin with we note that,
given a $d$-dimensional stochastic process $X$, the form factor
associated to $X$ is the function defined by 
\begin{equation}
S^{X}(k):=\frac{1}{n^{2}}\int_{0}^{n}dt\int_{0}^{n}ds\,\emph{E}\big(e^{i(k,X(t)-X(s))}\big),\quad k\in\mathbb{R}^{d},\;n\in\emph{N}\label{eq:form_factor_general}
\end{equation}
which, in case $X$ is $\nu$-self-similar, reduces to 
\begin{equation}
S^{X}(k)=\int_{0}^{1}dt\int_{0}^{1}ds\,\emph{E}\big(e^{in^{\nu}(k,X(t)-X(s))}\big).\label{eq:form_factor_general1}
\end{equation}
This function encodes in particular, to lowest order in $k$, the
\emph{root-mean-square radius of gyration} (or simply \emph{radius
of gyration}) of $X$, defined by 
\[
\left(R_{g}^{X}\right)^{2}:=\frac{1}{2}\frac{1}{n^{2}}\int_{0}^{n}dt\int_{0}^{n}ds\,\emph{E}\left(\big|X(t)-X(s)\big|^{2}\right)
\]
play an important role in the study of random path conformations.
For $\nu$-self-similar $d$-dimensional processes $X$, the radius
of gyration obeys a general relation to the mean square end-to-end
length 
\[
(R_{e}^{X})^{2}:=\,\emph{E}\left(\big|X(n)-X(0)\big|^{2}\right)
\]
of the paths, namely 
\begin{equation}
\frac{(R_{e}^{X})^{2}}{(2\nu+1)(2\nu+2)}=(R_{g}^{X})^{2}.\label{scale}
\end{equation}
In particular, for a $d$-dimensional Bm $B$ ($\nu=1/2$) we have
the celebrated equality, see for example \cite[Section~2.4]{Teraoka2002}
\begin{equation}
\frac{(R_{e}^{B})^{2}}{6}=(R_{g}^{B})^{2}.\label{eq:end2end_gyrationBm}
\end{equation}
In the following subsections we compute explicitly these notions for
the different classes of ggBm.

\subsection{The form factor for ggBm $B^{\beta,\alpha}$}

The most general case of the two parameter family $B^{\beta,\alpha}=\{B^{\beta,\alpha}(t),\,t\geq0\}$
with $0<\beta<1$ and $0<\alpha<2$ is now prepared to be investigated.
It corresponds to a $d$-dimensional ggBm process which is $\alpha/2$-self-similar
with stationary increments, cf.\ Section\ \ref{subsec:ggBm}. The
characteristic function of the increment of ggBm is given by 
\begin{equation}
\emph{E}\left(e^{i(k,B^{\beta,\alpha}(t)-B^{\beta,\alpha}(s))}\right)=E_{\beta}\left(-\frac{|k|^{2}}{2}|t-s|^{\alpha}\right).\label{nome}
\end{equation}
Denote by $S^{\beta,\alpha}:=S^{B^{\beta,\alpha}}$ the structure
factors for the ggBm process $B^{\beta,\alpha}$. Then, according
to (\ref{eq:form_factor_general1}) $S^{\beta,\alpha}$ is given by
\[
S^{\beta,\alpha}(k):=\int_{0}^{1}dt\int_{0}^{1}ds\,\emph{E}\big(e^{in^{1/2}(k,B^{\beta,\alpha}(t)-B^{\beta,\alpha}(s))}\big),\quad k\in\mathbb{R}^{d},\;n\in\emph{N}
\]
and it follows from (\ref{nome}) that $S^{\beta,\alpha}$ is equal
to 
\[
S^{\beta,\alpha}(k)=\int_{0}^{1}dt\int_{0}^{1}ds\,E_{\beta}\left(-\frac{n^{\alpha}|k|^{2}}{2}|t-s|^{\alpha}\right)=2\int_{0}^{1}dt\int_{0}^{t}ds\,E_{\beta}\left(-\frac{n^{\alpha}|k|^{2}}{2}|t-s|^{\alpha}\right).
\]
A change of variables $\tau=t-s$ reduces $S^{\beta,\alpha}$ to the
following integral 
\[
S^{\beta,\alpha}(k)=2\int_{0}^{1}d\tau\,(1-\tau)E_{\beta}\left(-\frac{n^{\alpha}|k|^{2}}{2}\tau^{\alpha}\right).
\]
Using eq.\ (\ref{eq:Euler_transform-MLf}) the above integral can
be computed and the explicit expression for $S^{\beta,\alpha}$ is
given in terms of the Fox-Wright function 
\[
S^{\beta,\alpha}(k)=2\,_{2}\Psi_{2}\left(\begin{array}{cc}
(1,\alpha), & (1,1)\\
(1,\beta), & (3,\alpha)
\end{array}\bigg|-\frac{n^{\alpha}|k|^{2}}{2}\right)
\]
or by its Taylor series as 
\begin{eqnarray*}
S^{\beta,\alpha}(k) & = & 2\sum_{j=0}^{\infty}\frac{\Gamma(\alpha j+1)\Gamma(j+1)}{\Gamma(\beta j+1)\Gamma(\alpha j+3)}\frac{(-\frac{1}{2}|k|^{2}n^{\alpha})^{j}}{j!}\\
 & = & 2\sum_{j=0}^{\infty}\frac{1}{\Gamma(\beta j+1)(\alpha j+1)(\alpha j+2)}\left(-\frac{n^{\alpha}|k|^{2}}{2}\right)^{j}.
\end{eqnarray*}
For fixed $\beta,\alpha$ the form factor depends only on $y^{2}=\frac{n^{\alpha}|k|^{2}}{2}$
via the so-called Debye function $f_{D}:$
\[
S^{\beta,\alpha}(k)=:f_{D}(y;\beta,\alpha),
\]

For the process $B^{\beta,\alpha}$ we thus have 
\begin{equation}
f_{D}(y;\beta,\alpha)=2\sum_{j=0}^{\infty}\frac{1}{\Gamma(\beta j+1)(\alpha j+1)(\alpha j+2)}\left(-y^{2}\right)^{j}.
\end{equation}
\begin{rem}
\label{rem:limits_ggBm}Among possible limit cases of the Debye function
$f_{D}$ we point out:
\begin{enumerate}
\item for $\beta=\alpha=1$, clearly we obtain the Debye function $f_{D}(\cdot;1,1)$
for Bm, see (\ref{eq:Debye_Bm}).
\item for any $0<\alpha<2$ fixed, the limit $\beta\rightarrow1$ is 
\begin{eqnarray}
f_{D}(y;1,\alpha) & = & 2\sum_{n=0}^{\infty}\frac{(-y^{2})^{n}}{n!(\alpha n+2)(\alpha n+1)}\nonumber \\
 & = & \frac{2(y^{2})^{-1/\alpha}}{\alpha}\gamma\left(\frac{1}{\alpha},y^{2}\right)-\frac{2(y^{2})^{-1/\alpha}}{\alpha}\gamma\left(\frac{2}{\alpha},y^{2}\right),\label{eq:Debye_fc_fBm}
\end{eqnarray}
which coincides with the Debye function for fBm with Hurst parameter
$H=\alpha/2,$ and $\gamma$ is the incomplete gamma function. 
\end{enumerate}
In Figure\ \ref{fig:Debye-fc-ggBm} we present the plots (linear
scale in left and loglog scale right) of the Debye function $f_{D}$
for different values of $\beta$ and $\alpha$. The asymptotic in
general is harder to obtain for the Fox-Wright function. In \cite{Braaksma1936}
and \cite{Kilbas2002} there are studies on the asymptotic for certain
classes of special functions which include the Fox-Wright function. 
\end{rem}

\begin{figure}
\begin{centering}
\includegraphics[scale=0.45]{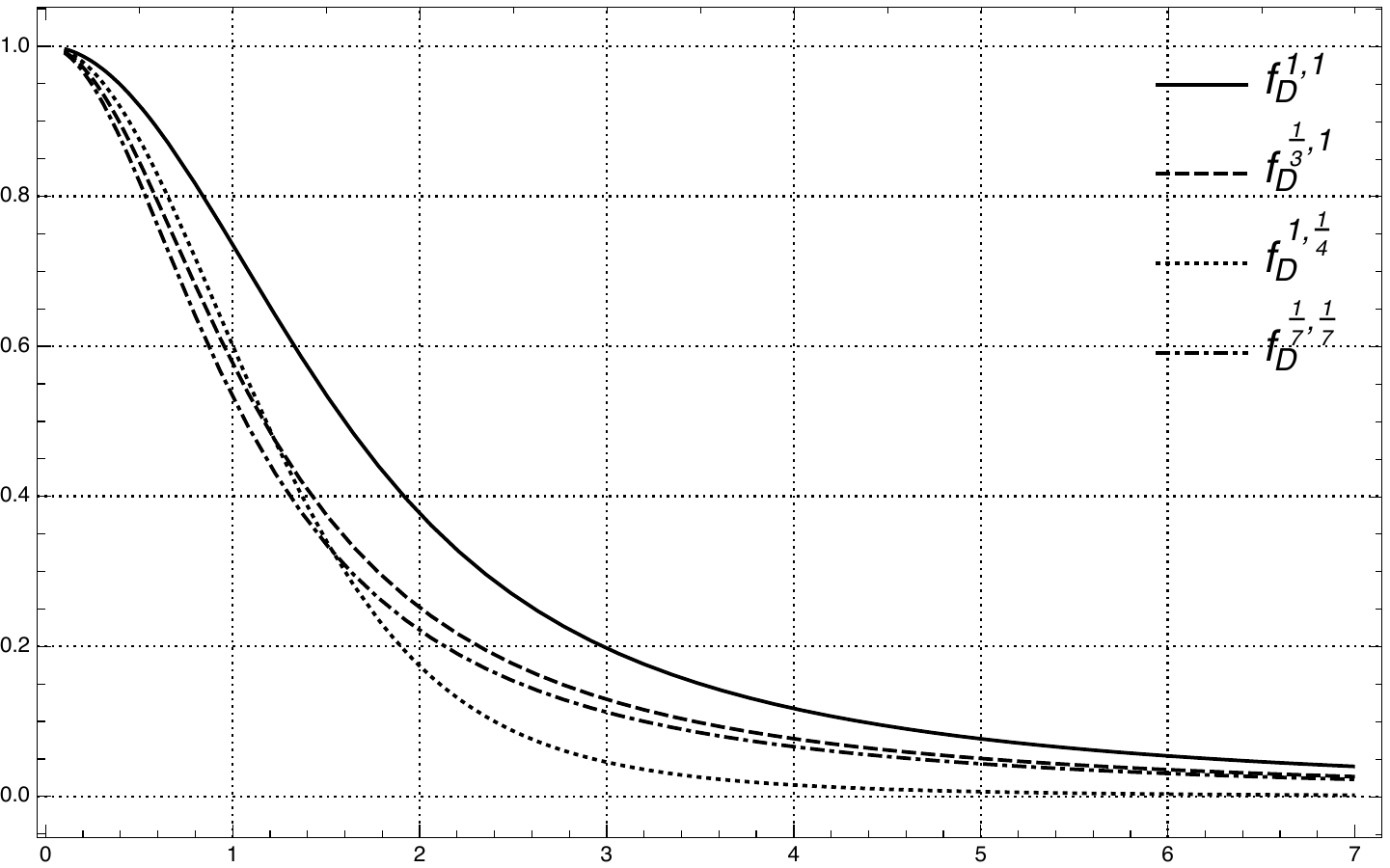}\hfill{}\includegraphics[scale=0.45]{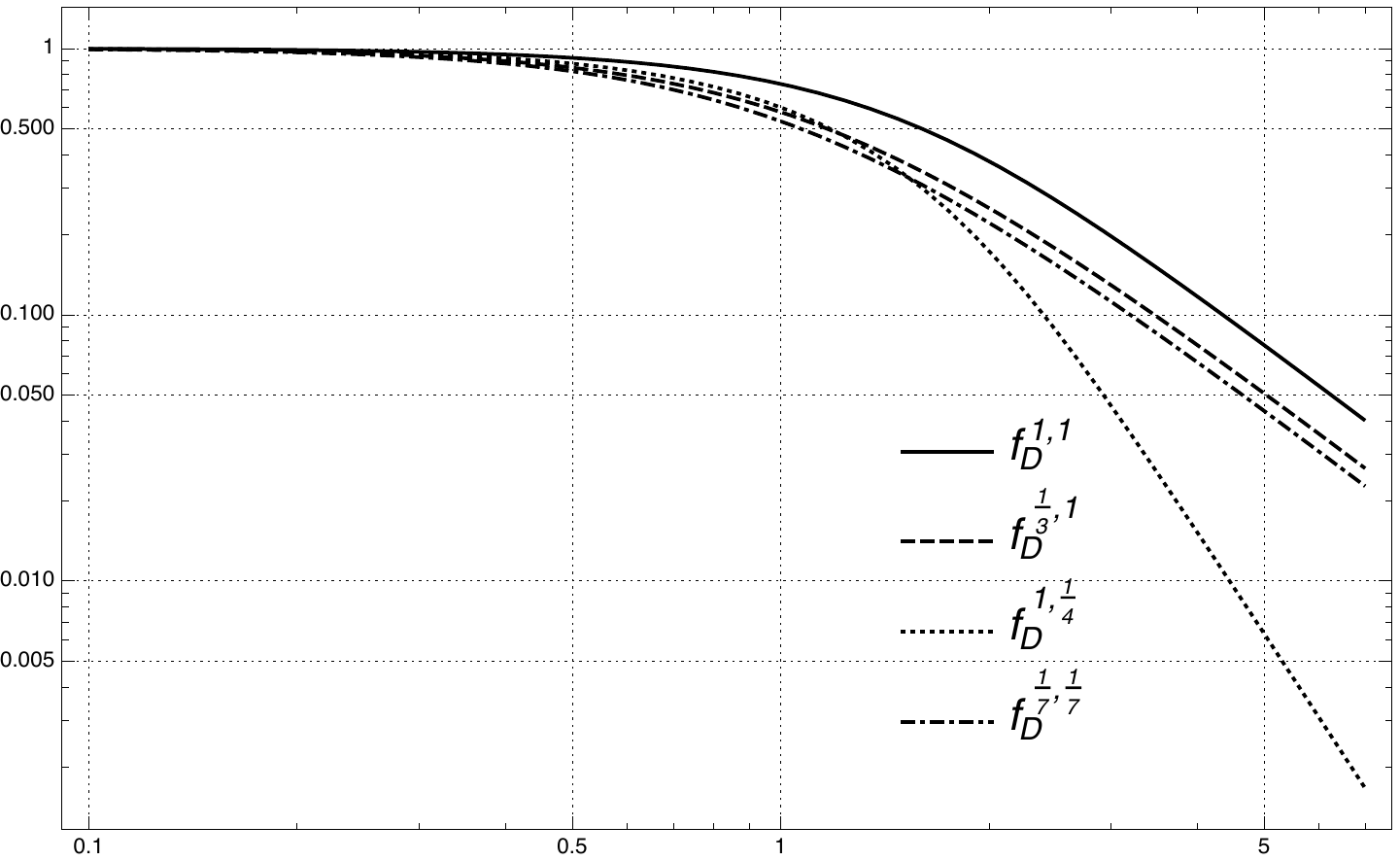} 
\par\end{centering}
\caption{The Debye function $f_{D}^{\beta,\alpha}=f_{D}(\cdot;\beta,\alpha)$.
Left linear scale; right log-log scale.}
\label{fig:Debye-fc-ggBm} 
\end{figure}

The radius of gyration for ggBm is obtained by expanding the form
factor to lowest order
\[
\left(R_{g}^{\beta,\alpha}\right)^{2}\ =\frac{n^{\alpha}}{\Gamma(\beta+1)(\alpha+1)(\alpha+2)}\ 
\]
and with
\[
\left(R_{e}^{\beta,\alpha}\right)^{2}=E\left(\left(B^{\beta,\alpha}(n)\right)^{2}\right)=\frac{n^{\alpha}}{\Gamma(\beta+1)}
\]
we have

\[
\frac{\left(R_{e}^{\beta,\alpha}\right)^{2}}{(\alpha+1)(\alpha+2)}=\left(R_{g}^{\beta,\alpha}\right)^{2}.
\]

\subsection{The form factor for gBm $B^{\beta}$}

For $\alpha=\beta$ the above specializes to gBm, with the Debye function
\[
f_{D}(y;\beta,\beta)=2\sum_{j=0}^{\infty}\frac{1}{\Gamma(\beta j+1)(\beta j+1)(\beta j+2)}\left(-y^{2}\right)^{j}=2E_{\beta,3}(-y^{2}).
\]
\begin{rem}
\label{rem:limits_gBm}The two limit cases ($\beta\rightarrow1$ and
$\beta\rightarrow0$) of the Debye function $f_{D}(\cdot;\beta,\beta)$
are then as follows.
\begin{enumerate}
\item For $\beta\rightarrow1$ we obtain 
\begin{equation}
f_{D}(y;1,1)=2\sum_{j=0}^{\infty}\frac{(-y^{2})^{j}}{(j+2)!}=\frac{2}{y^{4}}\left(e^{-y^{2}}-1+y^{2}\right),\label{eq:Debye_gBm_beta_1}
\end{equation}
i.e., the Debye function for Bm, cf.\ \cite[Ch.~28]{Hammouda2008}.
\item The other limit $\beta\rightarrow0$ is given by 
\begin{equation}
\lim_{\beta\rightarrow0}f_{D}(y;\beta,\beta)=\frac{1}{1+y^{2}},\quad|y|<1.\label{eq:Debye_gBm_beta_zero}
\end{equation}
\end{enumerate}
\end{rem}

\begin{rem}
In Figure\ \ref{fig:Debye-fc-gBm} we show the plots of the Debye
function $f_{D}(\cdot;\beta,\beta)$ for different values of $\beta$.
\begin{enumerate}
\item For large values of $y$ the function $f_{D}(\cdot;1,1)$ falls off
as $2y^{-2}$ (Bm case), right plot upper continuous line.
\item On the other hand, all the other cases $0<\beta<1$, the function
$f_{D}(\cdot;\beta,\beta)$ falls according to the asymptotic of the
Mittag-Leffler function $E_{\beta,3}$ given in (\ref{eq:asymptotic_MLf}).
In that case, we have 
\[
f_{D}(y;\beta,\beta)=2\sum_{j=1}^{N-1}\frac{(-y^{2})^{-j}}{\Gamma(3-\beta j)}\sim\frac{2}{\Gamma(3-\beta)}y^{-2}.
\]
In terms of the right plot in Figure\ \ref{fig:Debye-fc-gBm}, this
corresponds to the lower lines. 
\end{enumerate}
\end{rem}

\begin{figure}
\begin{centering}
\includegraphics[scale=0.45]{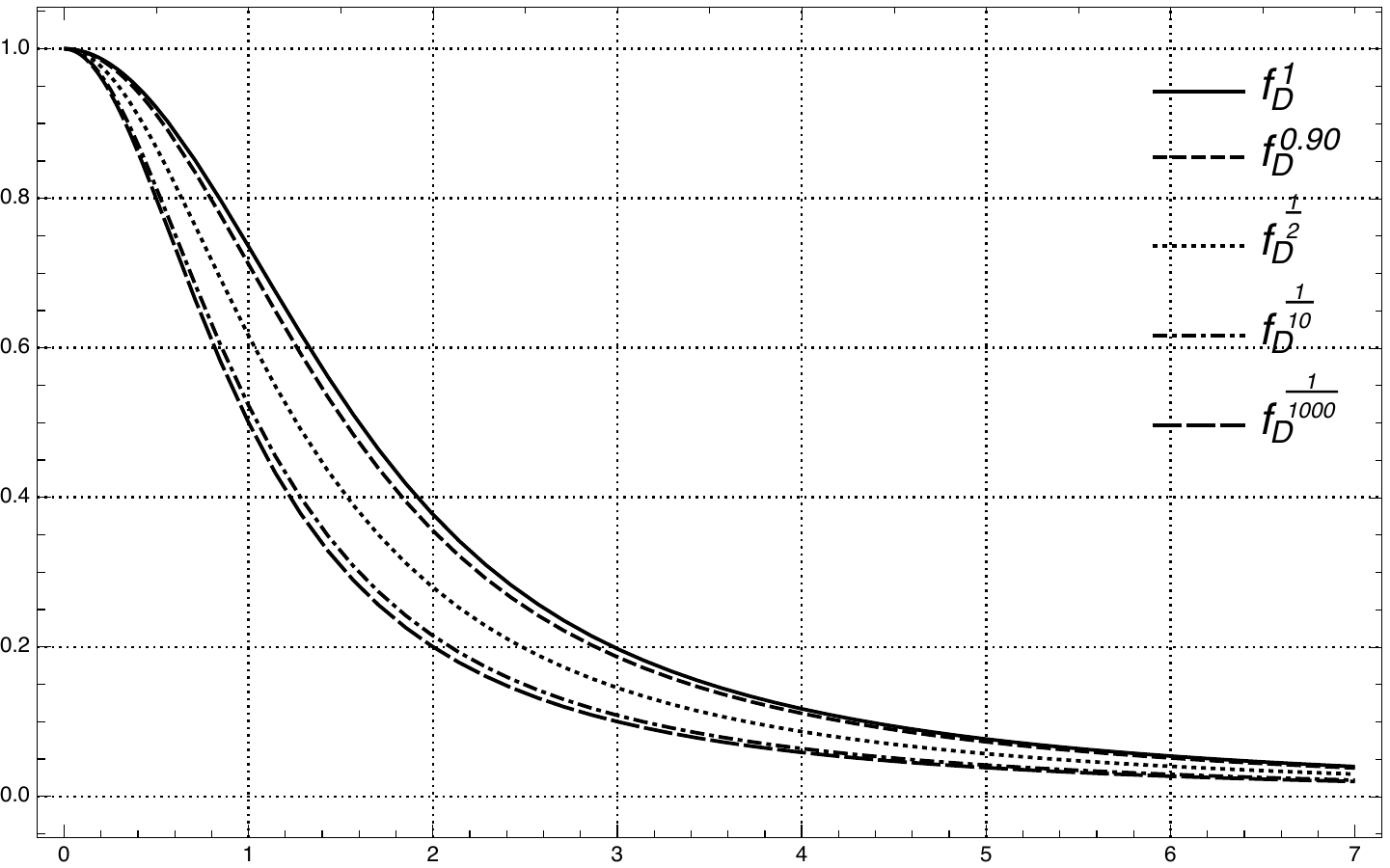}\hfill{}\includegraphics[scale=0.45]{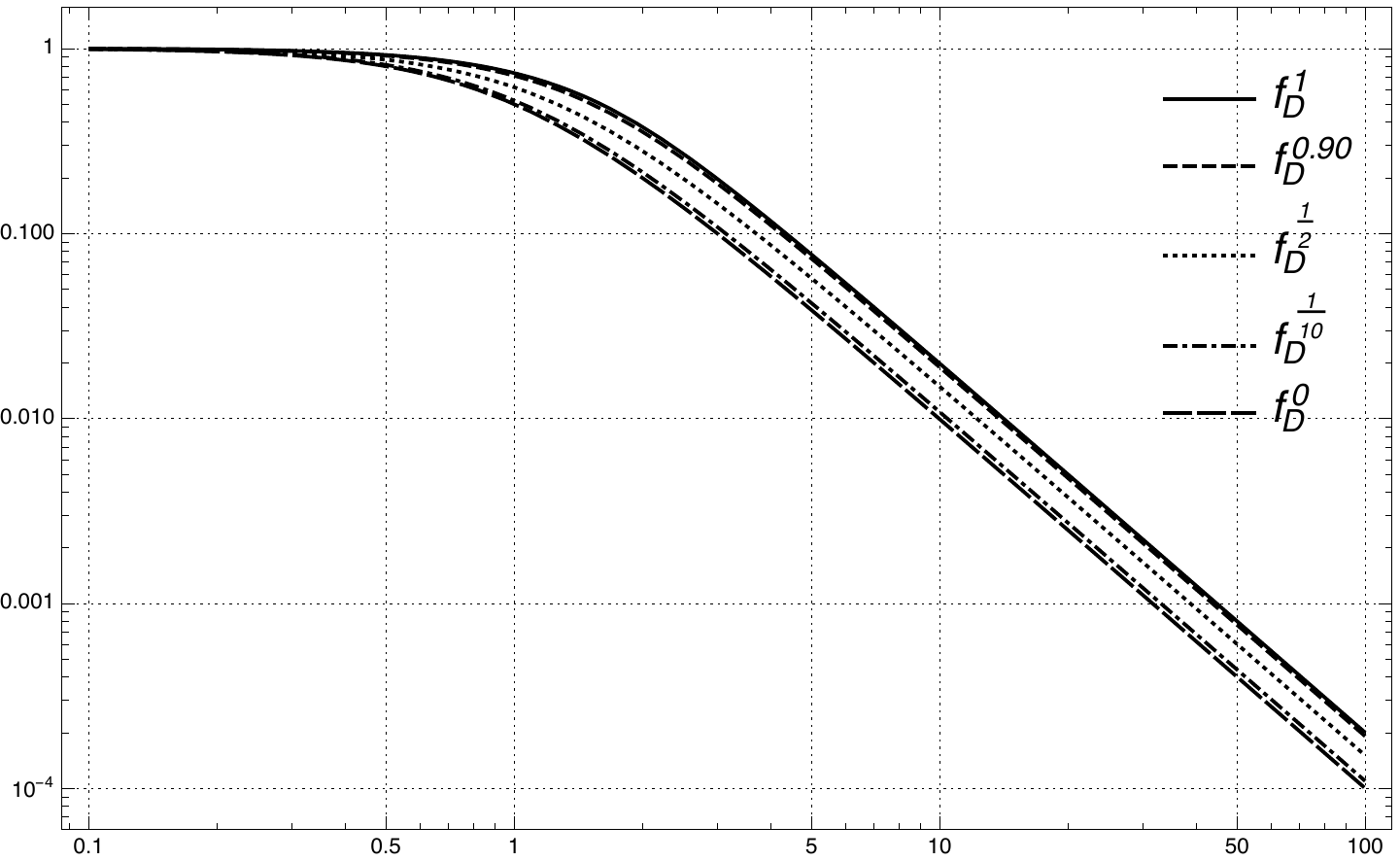} 
\par\end{centering}
\caption{The Debye function $f_{D}^{\beta}=f_{D}(\cdot;\beta,\beta)$ for gBm.
Left linear scale; right logaritmic scale.}
\label{fig:Debye-fc-gBm} 
\end{figure}

The gBm is a $\beta/2-$self-similar process,\ so the relation between
the end-to-end length and the radius of gyration for gBm is given
by 
\begin{equation}
\frac{\big(R_{e}^{\beta}\big)^{2}}{(\beta+1)(\beta+2)}=\big(R_{g}^{\beta}\big)^{2}.\label{eq:e2e_gyration_gBm}
\end{equation}

\subsection{The form factor for $B^{\beta,1}$}

In this case we have 
\begin{eqnarray}
f_{D}(y;\beta,1) & = & 2\sum_{j=0}^{\infty}\frac{(-y^{2})^{j}}{\Gamma(\beta j+1)(j+1)(j+2)}\nonumber \\
 & = & 2\sum_{j=0}^{\infty}\frac{(-y^{2})^{j}}{\Gamma(\beta j+1)(j+1)}-2\sum_{j=0}^{\infty}\frac{(-y^{2})^{j}}{\Gamma(\beta j+1)(j+2)}.\label{eq:Debye_fc_beta1}
\end{eqnarray}
\begin{rem}
\label{rem:limits_Debye_beta1}The two limit cases ($\beta\rightarrow1$
and $\beta\rightarrow0$) of the Debye function $f_{D}(\cdot;\beta,1)$
follows from (\ref{eq:Debye_fc_beta1}). Namely
\begin{enumerate}
\item For $\beta\rightarrow1$ we have 
\begin{equation}
f_{D}(y;1,1)\ =\frac{2}{y^{4}}\left(e^{-y^{2}}-1+y^{2}\right)\label{eq:Debye_Bm}
\end{equation}
which coincides with the Debye function for Bm.
\item The other limit $\beta\rightarrow0$ is obtained as 
\begin{eqnarray}
\lim_{\beta\rightarrow0}f_{D}(y;\beta,1) & = & 2\sum_{j=0}^{\infty}\frac{(-y^{2})^{j}}{(j+1)(j+2)}\nonumber \\
 & = & \frac{2}{y^{4}}\left(-y^{2}+(1+y^{2})\ln(1+y^{2})\right).\label{eq:Debye_zero1}
\end{eqnarray}
\end{enumerate}
\end{rem}

\begin{rem}
In Figure\ \ref{fig:Debye-fc-beta1} we present the plots of the
Debye function$f_{D}(y;\beta,1)$ for different values of $\beta$.
\begin{enumerate}
\item As it follows from (\ref{eq:Debye_Bm}) for large values of $y$ the
function $f_{D}(\cdot;1,1)$ falls as $2y^{-2}$, right plot lower
dash-dot line.
\item The other limit case $\beta\rightarrow0$, eq.\ (\ref{eq:Debye_zero1})
implies that $\lim_{\beta\rightarrow0}f_{D}(y;\beta,1)$ falls as
$\left(-2+4\ln(y)\right)y^{-2}$, right plot upper continuous line.
\item In the intermediate cases $\ f_{D}(y;\beta,1)$ decays as $\left(k_{1}+k_{2}\ln(y)\right)y^{-2}$
where $k_{1},k_{2}$ are constants. As an example, for $\beta=\frac{1}{3}$,
\[
f_{D}(y;1/3,\beta)\approx\left(-0.827976+2.95395\ln(y)\right)y^{-2},\;y\rightarrow\infty,
\]
dash and dotted lines in the right hand plot. 
\end{enumerate}
\end{rem}

\begin{figure}
\begin{centering}
\includegraphics[scale=0.45]{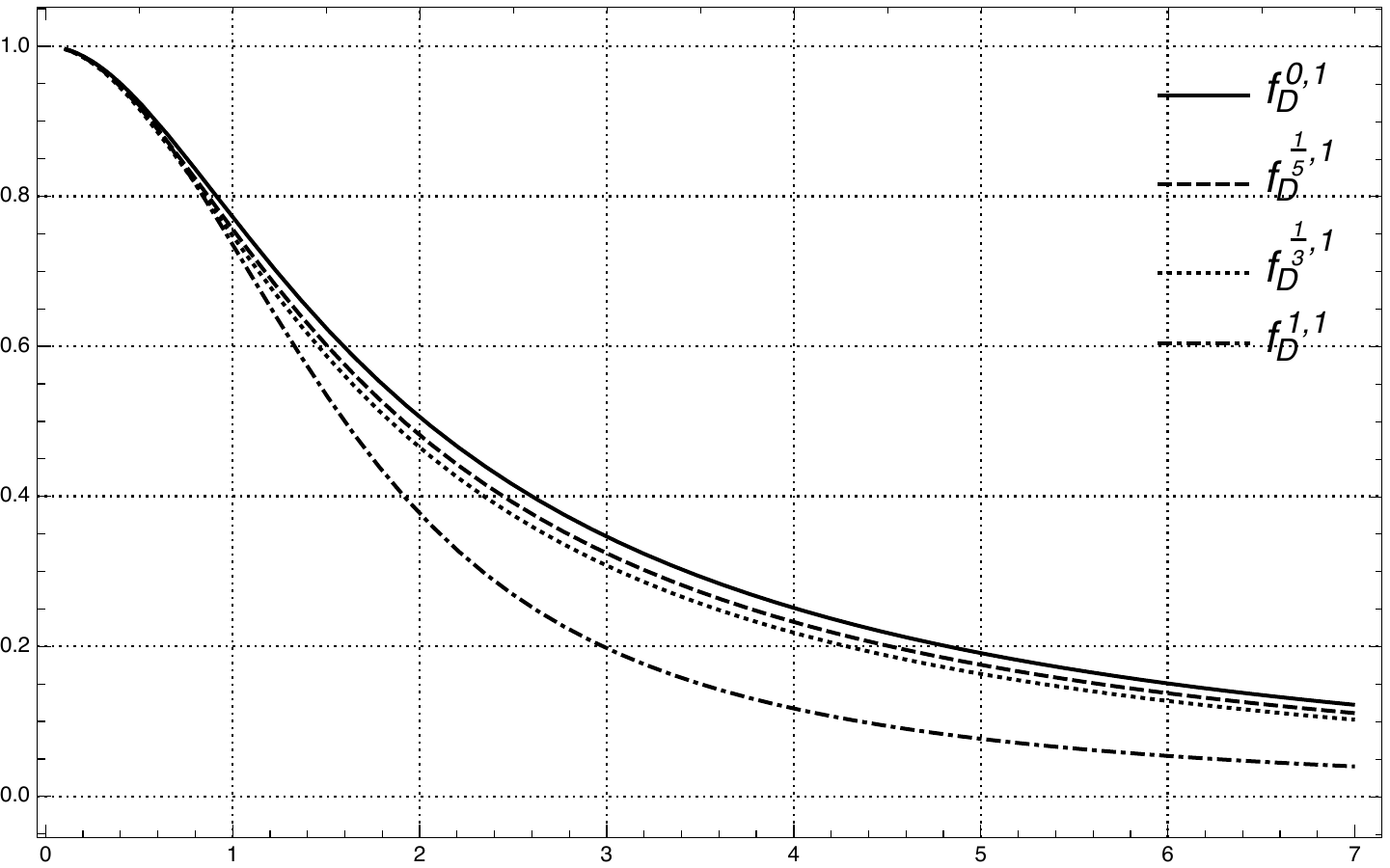}\hfill{}\includegraphics[scale=0.45]{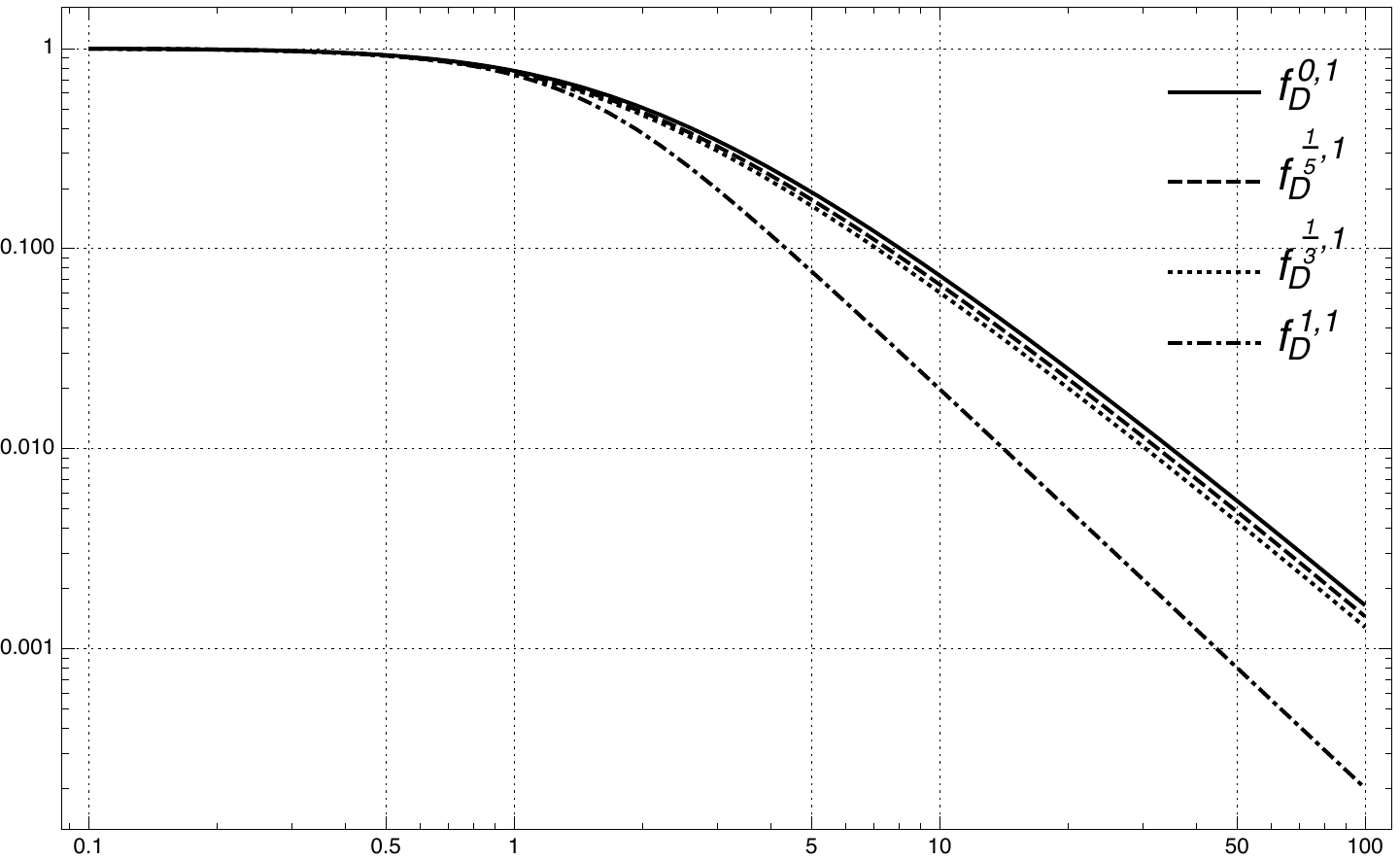}
\par\end{centering}
\caption{\label{fig:Debye-fc-beta1}The Debye function $f_{D}^{\beta,1}=f_{D}(\cdot;\beta,1)$.
Left linear scale; right logaritmic scale.}
\end{figure}

\subsection*{Acknowledgement}

We would like to express our gratitude for the hospitality of our
colleagues and friends Victoria Bernido and Christopher Bernido during
a very pleasant stay at Jagna during the 8th Jagna International Workshop:
``\emph{Structure, Function, and Dynamics: from nm to Gm}\textquotedblright ,
January 4-7, 2017. Financial support from FCT \textendash{} Funda{\c c\~a}o
para a Ci{\^e}ncia e a Tecnologia through the project UID/MAT/04674/2013
(CIMA Universidade da Madeira) and by the Humboldt Foundation are
gratefully acknowledged .

\end{document}